\documentclass[a4paper,12pt]{article}
\usepackage{latexsym}
\usepackage{amsmath}
\usepackage{amssymb}
\usepackage{hyperref}
\hypersetup{citecolor=red, linkcolor=blue, colorlinks=true}

\newtheorem{theorem}{Theorem}
\newtheorem{lemma}[theorem]{Lemma}

\newenvironment{proof}{{\sc Proof}:}{\hfill $\Box$} 

\def\cP{\mathcal{P}}
\def\Sym{\rm{Sym}}

\pagestyle{plain}

\begin{document}

\title{On the proportion of elements of prime order in finite symmetric groups}
\author{Cheryl E.~Praeger\footnote{Centre for the Mathematics of Symmetry and Computation, University of Western Australia, 35 Stirling Highway, Perth 6009, Australia (\texttt{cheryl.praeger@uwa.edu.au}). The first author acknowledges funding from Australian Research Council Discovery Project grant DP190100450.}\ \  and Enoch Suleiman\footnote{Department of Mathematics, Federal University Gashua, Yobe State, Nigeria.
(\texttt{enochsuleiman@gmail.com})}}

\maketitle

\begin{abstract}

\begin{center}
{\em Dedicated to Daniela Nikolva\\ on the occasion of her $70^{th}$ birthday.}
\end{center}

\medskip
We give a short proof for an explicit upper bound on the proportion  of permutations of a given prime order $p$, acting on a set of given size $n$, which is sharp for certain $n$ and $p$. Namely, we prove that if $n\equiv k\pmod{p}$ with $0\leq k\leq p-1$, then this proportion is at most $(p\cdot k!)^{-1}$ with equality if and only if $p\leq n<2n$. \\

MSC2010:  Primary 20B30.\quad Secondary: 05A05\\

Keywords: finite symmetric groups, element proportions, elements of prime order
\end{abstract}

\section{Introduction}

The proportion of permutations of a given prime order $p$ and acting on a set of given size $n$ has been extensively studied. In particular, for fixed $p$ as $n$ grows, recursive formulas and an asymptotic expansion have been known for more than 70 years, first by Jacabsthal~\cite{J} in 1949, with extensions in the early 1950s by Chowla, Herstein and Scott~\cite{CHS} and Moser and Wyman~\cite{MW}. We give a brief discussion of these results in Section~\ref{sec:review}.  Our interest is in explicit estimates for this proportion in cases, even if they not asymptotically best possible. We obtain some simple upper bounds for this proportion which are sharp for certain $n, p$. Such explicit estimates are relevant in discussing various group theoretic algorithms, which we mention briefly in Section~\ref{sec:algs}. Our main result is the following. It is proved in Section~\ref{sec:proof}.

\begin{theorem}\label{thm:main}
Let $p$ be a prime and $n$ a positive integer, and write $n=ap+k$ where $a\geq0$ and $0\leq k<p$. Then the proportion  $\rho_p(n)$  of elements  of order $p$ in the symmetric group $S_n$ satisfies
\[
\rho_p(n)\leq \frac{1}{p\cdot k!} \quad \mbox{with equality if and only if $p\leq n<2p$.}
\]
\end{theorem}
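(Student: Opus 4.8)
The plan is to bound $\rho_p(n)$ by a double-counting argument in which, for each element of order $p$, we reserve a set of $k$ of its fixed points. First I would record the elementary structural fact: an element $\sigma\in S_n$ has order $p$ exactly when every nontrivial cycle of $\sigma$ is a $p$-cycle; if it has $j$ of them then $1\le j\le a$ and $\sigma$ fixes $n-jp$ points, and since $j\le a$ we have $n-jp\ge n-ap=k$. The inequality $n-jp\ge k$ is the crucial point, as it guarantees that a reserved set of $k$ fixed points is always available.

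Next I would introduce the set $T$ of triples $(\sigma,C,F)$, where $\sigma\in S_n$, $C$ is one of the $p$-cycles occurring in the disjoint cycle decomposition of $\sigma$, and $F$ is a $k$-element subset of the fixed-point set of $\sigma$, and then count $|T|$ in two ways. Counting by $\sigma$: an element of order $p$ with $j$ cycles of length $p$ is the first coordinate of exactly $j\binom{n-jp}{k}\ge 1$ triples, so $|T|\ge \rho_p(n)\cdot n!$. Counting directly: choose $C$ in $\binom{n}{p}(p-1)!$ ways, then $F$ among the $n-p$ points outside $C$ in $\binom{n-p}{k}$ ways, and finally let $\sigma$ act arbitrarily on the remaining $n-p-k$ points, in $(n-p-k)!$ ways; this gives $|T|=\binom{n}{p}(p-1)!\binom{n-p}{k}(n-p-k)!=\frac{n!}{p\cdot k!}$. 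Comparing the two counts yields $\rho_p(n)\le \frac{1}{p\cdot k!}$.

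For the equality statement: if $a=1$ then $n-p=k$, so in the direct count the last factor is $(n-p-k)!=0!=1$ and $\sigma$ is forced to equal $C$; hence $T$ is in bijection with the set of $p$-cycles, which is precisely the set of elements of order $p$ (each fixing exactly $k$ points, so contributing $\binom{k}{k}=1$ triple), and equality holds. Conversely, if $a\ge 2$ then $n\ge 2p$ and there is an element $\sigma_0$ of order $p$ with two $p$-cycles; it fixes $n-2p\ge k$ points, so it is the first coordinate of $2\binom{n-2p}{k}\ge 2$ triples, whence $|T|\ge \rho_p(n)\cdot n! +1>\rho_p(n)\cdot n!$ and the inequality is strict. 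Since $a=1$ is equivalent to $p\le n<2p$, this completes the argument.

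I expect the only delicate point to be the equality analysis. The set $T$ may well contain triples whose first coordinate is not of order $p$ (for instance $\sigma$ could carry a $p$-cycle together with a transposition), so rather than trying to describe $T$ exhaustively when $a\ge 2$, the efficient route to strictness is to exhibit one element of order $p$ --- any one with at least two $p$-cycles --- that is the first coordinate of at least two triples. Everything else reduces to routine binomial bookkeeping.
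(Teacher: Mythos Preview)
Your double-counting argument is correct and genuinely different from the paper's proof. The paper proceeds by induction on $n$: it first establishes the recursion
\[
n\,\rho_p(n)=\rho_p(n-1)+\rho_p(n-p)+\frac{1}{(n-p)!}
\]
(by partitioning $\mathcal{P}(n,p)$ according to whether the point $1$ is fixed or moved), and then feeds the inductive hypothesis into this recursion, handling separately the ranges $p+1\le n\le 2p-1$, $n\ge 2p$ with $k>0$, and $n\ge 2p$ with $k=0$. Your method bypasses the recursion entirely: the triple count delivers the bound in one line, and the equality analysis is cleaner because it reduces to whether the residual permutation on the $n-p-k$ leftover points is forced to be the identity. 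The only small omission is the trivial case $a=0$ (that is, $n<p$), where $\rho_p(n)=0$ and the inequality is strict; your simplification $|T|=n!/(p\cdot k!)$ tacitly uses $n\ge p$, but this boundary case is immediate. The paper's approach has the advantage that the recursion is a reusable tool in its own right, while yours gives a transparent combinatorial explanation for why the bound takes the particular form $\frac{1}{p\cdot k!}$.
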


\subsection{Asymptotic estimates}\label{sec:review}

Let $p$ be a prime and $n$ an integer such that $n\geq p$. Let $\rho_p^*(n)$ 
be the proportion of elements in the symmetric group $S_n$ of order dividing $p$, that is to say, elements $x\in S_n$ such that $x^p=1$. This proportion has been extensively studied in the literature, and asymptotic results are available which determine the value, when $p$ is fixed and $n$ is unbounded. The result of Jacobstahl  \cite{J} from 1949 gives the following expression for $\rho_p^*(n)$, and also a generating function for $\rho_p^*(n)$. 
\[
\rho_p^*(n) = \sum_{i=1}^{[n/p]}\frac{1}{(n-ip)!i!p^i},\quad \mbox{and}\quad \sum_{n=1}^\infty \rho_p^*(n) x^n = \exp(x)(\exp(x^p/p)-1).
\]
With such an expression for $\rho^*_p(n)$, it may not be immediately obvious what upper or lower bounds it implies. Indeed, while the expression for $\rho^*_p(n)$ could have been applied to obtain our result in Theorem~\ref{thm:main}, we believe that the short inductive proof we provide is helpful in this respect.

Several years after Jacobstahl's work appeared, Moser and Wyman~\cite[(3.41)]{MW} considered the proportion $\rho_p(n)$ of elements of order equal to $p$, and obtained an asymptotic expression:  
\[
\rho_p(n)\sim \frac{1}{\sqrt{p}\cdot n!}\left(\frac{n}{e}\right)^{n\left(1-1/p)\right)}e^{n^{1/p}}
 \]
which, in the light of Stirling's formula $n! \sim (2\pi n)^{1/2} \cdot (n/e)^n$, shows that for fixed $p$ as $n$ grows,  $\rho_p(n) \sim n^{-n/p}$. There are many similar asymptotic results, for example, where the primality condition on $p$ is removed, see in particular the paper of Wilf~\cite{W}. A helpful survey of these results is included in \cite[Section 2.2.6]{NPS13} or the explicit bounds in \cite{BGHP}. Most of these estimates are asymptotic, whereas it is sometimes preferable to have explicit upper and/or power bounds for such proportions.

\subsection{Some algorithmic considerations}\label{sec:algs}

For computational purposes a finite group $G$  may be given as a group of permutations or a group of matrices over a finite field, usually by specifying a small generating set $G=\langle X\rangle$.  We may know, or suspect, that $G$ is isomorphic to a group such as $S_n$, and we may wish to prove or exploit this computationally. 
Algorithms to recognise $G$, or to construct a standard generating set for $G$, typically seek special kinds of elements. These elements are usually sought, or constructed from, randomly selected elements, and to understand the complexity of such searches we need estimates for the proportions of various kinds of elements in the group. 

For example,  in \cite{BLNPS}, as part of constructing a standard generating set for a `black-box group' $G$ isomorphic to $S_n$, a  transposition was constructed by searching, via random selection, for an element $b\in G$ of order $2f$ for some odd positive integer $f\leq n^{18\log n}$, such that $b^f$ corresponds to a transposition in $S_n$. The choice of the upper bound $n^{18\log n}$ for $f$ was chosen so that the proportion of such elements was large enough   to find such an element $b$ with high probability (at least $0.318 n^{1/2}$, \cite[Theorem 5.1]{BLNPS}), and also to construct the transposition $b^f$ at a reasonable cost,   \cite[Proposition 6.1]{BLNPS}. The benefit of this method over a direct search for a transposition, by examining random elements, is obvious as the proportion of transpositions is only $(2\cdot (n-2)!)^{-1}$. The paper in \cite{BLNPS} contains an analogous analysis for construction of $3$-cycles in alternating groups. More details about algorithmic applications of proportions of elements in symmetric groups are given in \cite[Section 2.2.8]{NPS13}.

Let us turn now to primes larger than $2$ or $3$. Just as transpositions and $3$-cycles can be constructed by taking powers of elements with a single cycle of length $x$ and all other cycles of length coprime to $x$ (where $x=2$ or $3$) so also, for any prime $p$, we can construct a \emph{$p$-cycle} (that is, a permutation with one cycle of length $p$ and fixing all other points) by taking an appropriate power of a permutation $g$ which has a  single cycle of length $p$ and all other cycles of length coprime to $p$. Such elements $g$ are called \emph{pre-$p$-cycles}, and are useful for algorithms involving both permutation groups and classical matrix groups (see for example \cite{GPU, LNP09, NPP10}). Let $s_{\neg p}(n)$ denote the proportion of elements of $S_n$ with no cycle length divisible by $p$, equivalently the proportion of \emph{$p$-regular} elements - permutations with order coprime to $p$. It was proved in \cite[Lemma 3.1(ii)]{NPP10} that the proportion of pre-$p$-cycles in $S_n$ is $s_{\neg p}(n-p)/p$, and explicit upper and lower bounds for $s_{\neg p}(n-p)$ were obtained in \cite[Theorem 2.3{b}]{BLNPS}, with simpler bounds derived in \cite[Lemma 4.2]{LNP09} and \cite[Lemma 3.1(i)]{NPP10}. These bounds imply that the proportion of pre-$p$-cycles in $S_n$ lies between
\[
\frac{1}{4p\cdot (n-p)^{1/p}} \quad \mbox{and}\quad \frac{3}{p\cdot (n-p)^{1/p}}. 
\] 
This is far greater than the proportion of $p$-cycles, namely $(p\cdot (n-p)!)^{-1}$. 

Along the same lines,  we note that the proportion of \emph{$p$-singular} elements of $S_n$, that is to say, elements with order a multiple of $p$, is $1-s_{\neg p}(n)$ which, by our comments above, is greater than $1-3 n^{-1/p}$. The simple upper bound given in Theorem~\ref{thm:main} shows that the proportion of $p$-singular elements of $S_n$ is far greater than $\rho_p(n)$.
This is an easy confirmation that constructing elements of order $p$ by taking powers of $p$-singular elements is much more efficient than searching for such elements directly by random selection.

\section{Proof of the main theorem}\label{sec:proof}
Let $n$ be a positive integer, and let $\Omega=\{1,\dots,n\}$ and $\Sym(\Omega)$ be the symmetric group $S_n$ on $\Omega$.   Let $\cP(n,p)$ denote the subset of  $\Sym(\Omega)$ consisting of all the elements of order $p$, so that 
\begin{equation}\label{e:rho}
\rho_p(n):=\frac{|\cP(n,p)|}{n!}.
\end{equation}
Note that here we define this set, and proportion, for all positive integers $n$; if $n<p$ then of course $\cP(n,p)$ is empty and $\rho_p(n)=0$. First we record this and another basic fact.

\begin{lemma}\label{lem1}
Let $n, p, \Omega, \cP(n,p)$ and $\rho_p(n)$ be as above.
\begin{enumerate}
\item[(a)] If $n<p$ then $\rho_p(n)=0$, and $\rho_p(p)=1/p$.

\item[(b)] If $n\geq p$, then, for each subset $\Delta\subseteq\Omega$ with $|\Delta|=p$, there are exactly $(p-1)!$ pairwise distinct $p$-cycles permuting the points of $\Delta$ (that is, $p$-cycles in $\Sym(\Delta)$).
\end{enumerate}
\end{lemma}

\begin{proof}
Part (a)  for $n<p$ follows from the discussion before the statement. So assume that $n\geq p$ and let $\Delta=\{\delta_1,\dots,\delta_p\}$ be a $p$-element subset of $\Omega$. Each $p$-cycle in $\Sym(\Delta)$ permutes all of the points of $\Delta$ in a single cycle and has a unique expression of the form $(\delta_1,\alpha_2,\dots,\alpha_p)$ where $\alpha_2,\dots,\alpha_p$ are precisely the points in $\Delta\setminus\{\delta_1\}=\{\delta_2,\dots,\delta_p\}$ in some order. To count the number of possibilities for the $p$-cycle, we observe that there are exactly $p-1$ choices for $\alpha_2\in \Delta\setminus\{\delta_1\}$ and, given $\alpha_2$, there are exactly $p-2$ choices for $\alpha_3$ from $\Delta\setminus\{\delta_1,\alpha_2\}$, and so on. We see that there are therefore exactly $(p-1)\times (p-2)\times\dots\times 1 = (p-1)!$ of these $p$-cycles in $\Sym(\Delta)$. 

Finally, if $n=p$, then the elements of $\Sym(\Omega)$ of order $p$ are precisely the $p$-cycles on $\Omega$, and we have just shown that there are exactly $(p-1)!$ such elements. Therefore $|\cP(p,p)|=(p-1)!$ and $\rho_p(p)
= |\cP(p,p)|/p! = 1/p$.
\end{proof}

\medskip
We note that Lemma~\ref{lem1} proves Theorem~\ref{thm:main} for the smallest values of $n$, namely for $1\leq n\leq p$. The main proof will be by induction on $n$ with these  as the `base cases'. It will use the following recursion for $\rho_p(n)$.

\begin{lemma}\label{lem2}
Let $p$ be a prime and $n$ an integer such that $n\geq p+1$. Then
\[
n\cdot \rho_p(n) = \rho_p(n-1) + \rho_p(n-p) + \frac{1}{(n-p)!}.
\]
\end{lemma}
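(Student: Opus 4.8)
The plan is to count elements of $\cP(n,p)$ by conditioning on the behaviour of a fixed point, say $n$, under a permutation $g$ of order $p$. Since $g$ has order $p$ with $p$ prime, every nontrivial cycle of $g$ has length exactly $p$, so the point $n$ is either fixed by $g$ or lies in a $p$-cycle of $g$. This dichotomy will produce two of the three terms on the right-hand side; the third term comes from a refinement needed to avoid over- or under-counting, as explained below.

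First I would handle the case where $g$ fixes $n$. Then $g$ restricts to an element of order $p$ (or the identity) on $\Omega\setminus\{n\}=\{1,\dots,n-1\}$; but we need order exactly $p$, and since $n\geq p+1$ we have $n-1\geq p$, so the number of such $g$ is exactly $|\cP(n-1,p)|$. Next I would handle the case where $n$ lies in a $p$-cycle of $g$: choose the other $p-1$ points of that cycle from $\Omega\setminus\{n\}$ — there are $\binom{n-1}{p-1}$ ways — then choose the $p$-cycle on those $p$ points containing $n$, of which there are $(p-1)!$ by Lemma~\ref{lem1}(b) (equivalently, $(p-1)!$ cyclic arrangements), and finally choose the action of $g$ on the remaining $n-p$ points, which must be an element of order dividing $p$, i.e.\ of order $p$ or the identity. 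The number of elements of order dividing $p$ in $S_{n-p}$ is $|\cP(n-p,p)| + 1$. Multiplying, this case contributes $\binom{n-1}{p-1}(p-1)!\bigl(|\cP(n-p,p)|+1\bigr)$.

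Adding the two cases gives
\[
|\cP(n,p)| = |\cP(n-1,p)| + \binom{n-1}{p-1}(p-1)!\bigl(|\cP(n-p,p)|+1\bigr).
\]
Now I would simplify $\binom{n-1}{p-1}(p-1)! = \frac{(n-1)!}{(n-p)!}$, divide the whole identity by $n!$, and use \eqref{e:rho}: the left side becomes $\rho_p(n)$, the first term becomes $\rho_p(n-1)/n$, and the second becomes $\frac{(n-1)!}{n!\,(n-p)!}\bigl(|\cP(n-p,p)|+1\bigr) = \frac{1}{n}\left(\frac{|\cP(n-p,p)|}{(n-p)!} + \frac{1}{(n-p)!}\right) = \frac{1}{n}\left(\rho_p(n-p) + \frac{1}{(n-p)!}\right)$. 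Multiplying through by $n$ yields exactly the claimed recursion.

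The main point requiring care — the only real obstacle — is the bookkeeping in the second case: one must remember that the action on the $n-p$ leftover points is allowed to be the identity (so that $g$ itself can be a single $p$-cycle), which is why the factor is $|\cP(n-p,p)|+1$ rather than $|\cP(n-p,p)|$, and it is precisely this ``$+1$'' that supplies the $\frac{1}{(n-p)!}$ term in the recursion. I would also double-check that the cases are genuinely disjoint and exhaustive (they are, since the orbit of $n$ has size $1$ or $p$) and that no cyclic $p$-arrangement is counted twice (it is not, since specifying which $p-1$ points accompany $n$ and then the cyclic order of the $p$-cycle determines it uniquely).
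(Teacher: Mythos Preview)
Your proposal is correct and follows essentially the same approach as the paper: partition $\cP(n,p)$ according to whether a fixed point (the paper uses $1$, you use $n$) is fixed by $g$ or lies in a $p$-cycle, count each part exactly as you do, and simplify. The paper likewise highlights that the ``$+1$'' for the identity on the remaining $n-p$ points is what produces the $\frac{1}{(n-p)!}$ term.
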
 

\begin{proof}
By \eqref{e:rho}, $|\cP(n,p)|=n!\rho_p(n)$ for each positive integer $n$. 
We establish the recursion by enumerating $\cP(n,p)$ as follows.  We partition $\cP(n,p)$ as $\cP_1(n,p)\cup\cP_{2}(n,p)$, where $\cP_1(n,p)$ consists of all elements $g\in\cP(n,p)$ such that $1^g=1$, and $\cP_{2}(n,p)$ consists of all elements $g\in\cP(n,p)$ such that $1^g\ne1$. Now $\cP_1(n,p)$ is precisely the set of elements of order $p$ in $\Sym(\Delta)$, where $\Delta=\{2,3,\dots,n\}$ and hence $|\cP_1(n,p)| = (n-1)! \rho_p(n-1)$, by \eqref{e:rho}.

Consider now the complement  $\cP_{2}(n,p)$. To enumerate the elements of $\cP_{2}(n,p)$, we note that, for each such element $g$, the point $1$ lies in a cycle $h$ of $g$ of length $p$, since $1^g\ne 1$. The number of such cycles is equal to the number $\binom{n-1}{p-1}$ of subsets $\Delta'$ of $(p-1)$-element subsets of $\Omega\setminus\{1\}$ such that the $p$-cycle $h$ permutes the points of $\Delta:=\Delta'\cup\{1\}$, times the number $(p-1)!$ of $p$-cycles in $\Sym(\Delta)$ (using Lemma~\ref{lem1}(b)).  Then, for each of these 
    $\binom{n-1}{p-1}(p-1)!$ cycles, say $h$ on a subset $\Delta$  containing the point $1$, the elements $g\in \cP_{2}(n,p)$ which have $h$ as a $p$-cycle are precisely the permutations $g=hg'$ where $g'$ is an element of $\Sym(\Omega\setminus\Delta)=S_{n-p}$ of order dividing $p$. The number of such elements $g'$ is equal to the number $|\cP(n-p,p)|=(n-p)! \rho_p(n-p)$ of elements of $S_{n-p}$ of order $p$, together with the identity element.  Thus
    \begin{align*}
    |\cP_{2}(n,p)| &= \binom{n-1}{p-1}(p-1)! \left( (n-p)!\, \rho_p(n-p) + 1 \right)\\ 
    &= (n-1)! \left(  \rho_p(n-p) + \frac{1}{(n-p)!} \right).
    \end{align*}
    It now follows that 
    \begin{align*}
 n!\rho_p(n) &=  |\cP(n,p)| =  |\cP_1(n,p)| + |\cP_{2}(n,p)| \\
 & = (n-1)! \rho_p(n-1) + (n-1)! \left(  \rho_p(n-p) + \frac{1}{(n-p)!} \right)
    \end{align*}  
and hence $n\cdot \rho_p(n) =   \rho_p(n-1) + \rho_p(n-p) + \frac{1}{(n-p)!}$, completing the proof.  
\end{proof}

\medskip
We now use Lemma~\ref{lem1} and~\ref{lem2} to prove Theorem~\ref{thm:main}.

\medskip%\noindent
{\sc Proof of Theorem} \ref{thm:main}.\quad Let $n$ be a positive and let $a, k$ be the (unique) integers such that $n=ap+k$ where $a\geq 0$ and $0\leq k\leq p-1$. We will prove that 
\[
\rho_p(n)\leq \frac{1}{p\cdot k!} \quad \mbox{with equality if and only if $p\leq n<2p$.}
\]
This assertion follows from Lemma~\ref{lem1}(a) if $n\leq p$, so assume that $n\geq p+1$, and assume inductively that the result holds for all integers strictly less than $n$. Then, by Lemma~\ref{lem2}, 
\[
\rho_p(n)= \frac{\rho_p(n-1)}{n} + \frac{\rho_p(n-p)}{n} +\frac{1}{n\cdot (n-p)!}
\]
Now our standard representations for $n-p$ and $n-1$ are $n-p=(a-1)p+k$, and $n-1 = ap+(k-1)$ if $1\leq k\leq p-1$ and $n-1=(a-1)p + (p-1)$ if $k=0$. 

Suppose first that $p+1\leq n\leq 2p-1$, that is to say, $a=1$ and $1\leq k\leq p-1$. Then $n-p=k<p$ so $\rho_p(n-p)=0$ by Lemma~\ref{lem1}(a), and by induction, $\rho_p(n-1) = 1/(p\cdot (k-1)!)$. Thus 
\[
\rho_p(n)= \frac{1}{np\cdot (k-1)!} + 0 +\frac{1}{n\cdot k!} = \frac{k+p}{np\cdot k!} = \frac{1}{p\cdot k!}
\]
and the result holds in these cases. 
Thus we may assume that $n\geq 2p$. If $k>0$ then, by induction, $\rho_p(n-1) \leq 1/(p\cdot (k-1)!)$  and $\rho_p(n-p)\leq 1/(p\cdot k!)$, so that  
\[
\rho_p(n)\leq  \frac{1}{np\cdot (k-1)!} + \frac{1}{np\cdot k!} +\frac{1}{n\cdot (n-p)!} = \frac{k+1+ p\cdot k!/(n-p)!}{np\cdot k!}.
\]
Since $n-p\geq p\geq k+1$, we have $p\cdot k!/(n-p)!\leq 1$, and so the numerator $k+1+ p\cdot k!/(n-p)!\leq k+2\leq p+1 < n$, so $\rho_p(n) < 1/(p\cdot k!)$, and the result follows in these cases. Suppose finally that $n\geq 2p$ and $k=0$, that is, $n=ap$ with $a\geq2$. Then, by induction, $\rho_p(n-p)\leq 1/p$ and $\rho_p(n-1) \leq  1/(p\cdot (p-1)!) = 1/p!$. Thus
\[
\rho_p(n)\leq  \frac{1}{n\cdot p!} + \frac{1}{np} +\frac{1}{n\cdot (n-p)!} = \frac{1}{np}\left( \frac{1}{(p-1)!} + 1 +\frac{p}{(n-p)!}\right).
\]
 Since $n-p\geq p$ we have $p/(n-p)! \leq 1/(p-1)!$ and so 
\[ 
 \rho_p(n) \leq \frac{1}{np}\left( \frac{2}{(p-1)!} +1\right) \leq \frac{3}{np} < \frac{1}{p}
 \]
and this completes the proof by induction of Theorem~\ref{thm:main}.

\end{document}